 \newtheorem{thm}{Theorem}[section]
 \newtheorem{cor}[thm]{Corollary}
 \newtheorem{prop}[thm]{Proposition}
 \theoremstyle{definition}
 \newtheorem{defn}[thm]{Definition}
 \theoremstyle{remark}
 \newtheorem{rem}[thm]{Remark}
 \newtheorem*{ex}{Example}
 \numberwithin{equation}{section}
\newcommand{\ve}{\varepsilon}
\begin{document}

%-------------------------------------------------------------------------
% editorial commands: to be inserted by the editorial office
%
%\firstpage{1} \volume{228} \Copyrightyear{2004} \DOI{003-0001}
%
%
%\seriesextra{Just an add-on}
%\seriesextraline{This is the Concrete Title of this Book\br H.E. R and S.T.C. W, Eds.}
%
% for journals:
%
%\firstpage{1}
%\issuenumber{1}
%\Volumeandyear{1 (2004)}
%\Copyrightyear{2004}
%\DOI{003-xxxx-y}
%\Signet
%\commby{inhouse}
%\submitted{March 14, 2003}
%\received{March 16, 2000}
%\revised{June 1, 2000}
%\accepted{July 22, 2000}
%
%
%
%---------------------------------------------------------------------------
%Insert here the title, affiliations and abstract:
%

\title[Mappings contracting perimeters of triangles]{Fixed point theorem for mappings \\ {contracting} perimeters of triangles}

%----------Author 1
\author[Evgeniy Petrov]{Evgeniy Petrov}

\address{
Institute of Applied Mathematics and Mechanics\\
of the NAS of Ukraine\\
Batiuka str. 19\\
84116 Slovyansk\\
Ukraine}

\email{eugeniy.petrov@gmail.com}

%\thanks{This work was completed with the support of our \TeX-pert.}
%----------Author 2
%\author{A Second Author}
%\address{The address of\br
%the second author\br
%sitting somewhere\br
%in the world}
%\email{dont@know.who.knows}
%----------classification, keywords, date
\subjclass{Primary 47H10; Secondary 47H09}

\keywords{fixed point theorems, mappings contracting perimeters of triangles, metric space}

\date{July 30, 2022}
%----------additions
%\dedicatory{To my boss}
%%% ----------------------------------------------------------------------

\begin{abstract}
We consider a new type of mappings in metric spaces which can be characterized as mappings contracting perimeters of triangles. It is shown that such mappings are continuous. The fixed point theorem for such mappings is proved and the classical Banach fixed-point theorem is obtained like a simple corollary. An example of a mapping contractive perimeters of triangles which is not a contraction mapping is constructed.
\end{abstract}

%%% ----------------------------------------------------------------------
\maketitle
%%% ----------------------------------------------------------------------
%\tableofcontents

\section{Introduction}
The Contraction Mapping Principle was established by S. Banach in his dissertation (1920) and published in 1922~\cite{Ba22}. Although the idea of successive approximations in a number of concrete situations (solution of differential and integral equations, approximation theory) had appeared earlier in the works of P.~L.~Chebyshev, E.~Picard, R. Caccioppoli, and others, S. Banach was the first who formulated this result in a correct abstract form suitable for a wide range of applications. After a century, the interest of mathematicians around the world to fixed-point theorems is still very high.
This is confirmed by the appearance in recent decades of numerous articles and monographs devoted to the fixed point theory and its applications, see, e.g., the monographs~\cite{Ki01,AJS18,Su18} for a survey on fixed point results.

The Banach contraction principle has been generalized in many ways over the years. In~\cite{KS14} authors noted that except Banach's fixed point theorem there are also three classical fixed point theorems against which metric extensions are usually checked. These are, respectively, Nadler's well-known set-valued extension of Banach's theorem~\cite{Na69}, the extension of Banach's theorem to nonexpansive mappings~\cite{Ki65}, and Caristi's theorem~\cite{Ca76}. At the same time it is possible to distinguish at least two types of generalizations of such theorems: in the first case the contractive nature of the mapping is weakened, see, e.g.~\cite{BW69,Ci74,Ki03,Mk69,Ra62,Re72,Su06,Wa12,Pr20,Po21}; in the second case the topology is weakened, see, e.g.~\cite{DBC12,Ba00,CS12,DD07,Fr00,JKR11,KKR90,KS13,LS12,Sa10,SRR09,Ta74,Tu12,SIIR20}.

Let $X$ be a metric spaces. In the present paper we consider a new type of mappings $T\colon X\to X$ which can be characterized as mappings contracting perimeters of triangles and prove the fixed point theorem for such mappings. Although the proof of the main theorem of this work is based on the ideas of the proof of Banach's classical theorem, the essential difference is that the definition of our mappings is based on the mapping of three points of the space instead of two. Moreover, we additionally require necessary condition which prevents the mapping $T$ from having points with least period two, $T(T(x))\neq x$ for all $x\in X$ such that  $Tx\neq x$. The ordinary contraction mappings form an important subclass of these mappings which immediately allows us to obtain the classical Banach's theorem like a simple corollary. An example of a mapping contracting perimeters of triangles which is not a contraction mapping is constructed for a space $X$ with $|X|=\aleph_0$, where $|X|$ is the cardinality of the set $X$.

\section{Mappings contracting perimeters of triangles}

\begin{defn}\label{d1}
Let $(X,d)$ be a metric space with $|X|\geqslant 3$. We shall say that $T\colon X\to X$ is a \emph{mapping contracting perimeters of triangles} on $X$ if there exists $\alpha\in [0,1)$ such that the inequality
  \begin{equation}\label{e1}
   d(Tx,Ty)+d(Ty,Tz)+d(Tx,Tz) \leqslant \alpha (d(x,y)+d(y,z)+d(x,z))
  \end{equation}
  holds for all three pairwise distinct points $x,y,z \in X$.
\end{defn}

\begin{rem}
Note that the requirement for $x,y,z\in X$ to be pairwise distinct is essential. One can see that otherwise this definition is equivalent to the definition of contraction mapping.
\end{rem}

\begin{prop}
Mappings contracting perimeters of triangles are continuous.
\end{prop}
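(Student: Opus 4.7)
The plan is to fix an arbitrary $x_0 \in X$ and derive the local Lipschitz-type estimate $d(Tx, Tx_0) \leq 2\alpha\, d(x, x_0)$ for every $x \in X$, which is more than enough for continuity at $x_0$. The idea is to insert into (\ref{e1}) an auxiliary third point $z$ that I am free to vary, and to push $z$ toward $x_0$ so that most of the right-hand side collapses.

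Concretely, I would first observe that if $x_0$ is isolated in $X$ then continuity at $x_0$ is automatic, so it suffices to handle the case in which $x_0$ is a limit point. In that case I would pick a sequence $z_k \in X$ with $z_k \neq x_0$ and $z_k \to x_0$. For any $x \in X$ with $x \neq x_0$, the fact that $z_k \to x_0 \neq x$ guarantees $z_k \neq x$ for all sufficiently large $k$, so that the three points $x, x_0, z_k$ are pairwise distinct and (\ref{e1}) applies. Discarding the non-negative terms $d(Tx_0, Tz_k)$ and $d(Tx, Tz_k)$ from the left-hand side gives
\begin{equation*}
d(Tx, Tx_0) \leq \alpha \bigl( d(x, x_0) + d(x_0, z_k) + d(x, z_k) \bigr).
\end{equation*}
Passing to the limit $k \to \infty$, the middle term vanishes and the last term tends to $d(x, x_0)$, which yields
\begin{equation*}
d(Tx, Tx_0) \leq 2\alpha\, d(x, x_0).
\end{equation*}
The case $x = x_0$ being trivial, this is the desired Lipschitz-type estimate, and continuity follows at once.

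I do not foresee a serious technical obstacle: the whole argument rests on the freedom to choose the third vertex of the triangle and to let it accumulate at $x_0$. The only point requiring care is the existence of such an auxiliary $z$ distinct from both $x$ and $x_0$ and arbitrarily close to $x_0$, which is exactly what the limit-point hypothesis provides; the assumption $|X|\geqslant 3$ from Definition~\ref{d1} is of course used implicitly so that the triangle condition has content at all.
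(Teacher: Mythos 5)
Your proof is correct and follows essentially the same route as the paper: treat isolated points trivially and, at an accumulation point, insert an auxiliary nearby third point into (\ref{e1}) and absorb the extra terms via the triangle inequality. Your limiting step merely packages the paper's $\delta$-estimate into the cleaner bound $d(Tx,Tx_0)\leqslant 2\alpha\, d(x,x_0)$, and is in fact slightly more careful than the paper's argument about keeping the three points pairwise distinct.
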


\begin{proof}
Let $(X,d)$ be a metric space with $|X|\geqslant 3$, $T\colon X\to X$ be a mapping contracting perimeters of triangles on $X$ and let $x_0$ be an isolated point in $X$. Then, clearly, $T$ is continuous at $x_0$. Let now $x_0$ be an accumulation point. Let us show that for every $\ve>0$, there exists $\delta>0$ such that $d(Tx_0,Tx)<\ve$ whenever $d(x_0,x)<\delta$.
Since $x_0$ is an accumulation point, for every $\delta>0$ there exists $y\in X$ such that $d(x_0,y)< \delta$. By~(\ref{e1}) we have
$$
d(Tx_0,Tx)\leqslant d(Tx_0,Tx)+d(Tx_0,Ty)+d(Tx,Ty)
$$
$$
\leqslant \alpha(d(x_0,x)+d(x_0,y)+d(x,y)).
$$
Using the triangle inequality $d(x,y) \leqslant d(x_0,x)+d(x_0,y)$, we have
$$
d(Tx_0,Tx)\leqslant
2\alpha(d(x_0,x)+d(x_0,y))<2\alpha(\delta + \delta)=4\alpha\delta.
$$
Setting $\delta=\ve /(4\alpha)$, we obtain the desired inequality.
\end{proof}

\begin{thm}\label{t1}
Let $(X,d)$, $|X|\geqslant 3$, be a complete metric space and let the mapping $T\colon X\to X$ satisfy the following two conditions:
\begin{itemize}
  \item [(i)] $T(T(x))\neq x$ for all $x\in X$  such that  $Tx\neq x$.
  \item [(ii)] $T$ is a mapping contracting perimeters of triangles on $X$.
\end{itemize}
Then $T$ has a fixed point. The number of fixed points is at most two.
\end{thm}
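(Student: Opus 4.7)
The plan is to imitate Banach's argument, but with the triangle perimeter $P_n=d(x_n,x_{n+1})+d(x_{n+1},x_{n+2})+d(x_n,x_{n+2})$ playing the role of the single distance $d(x_n,x_{n+1})$. I would fix any $x_0\in X$ and set $x_{n+1}=Tx_n$. First I would dispose of trivial cases: if $x_k=x_{k+1}$ for some $k$, then $x_k$ is already a fixed point and we are done. So assume $x_n\neq x_{n+1}$ for every $n$.

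The crucial preliminary observation is that condition~(i) guarantees the three iterates $x_n, x_{n+1}, x_{n+2}$ are \emph{pairwise} distinct: we have $x_n\neq x_{n+1}$ by assumption, $x_{n+1}\neq x_{n+2}$ by the same assumption applied at the next index, and $x_n\neq x_{n+2}=T(T(x_n))$ by~(i). Hence inequality~(\ref{e1}) may be applied to the triple $(x_n,x_{n+1},x_{n+2})$, which yields $P_{n+1}\leqslant\alpha P_n$, and by induction $P_n\leqslant\alpha^n P_0$. In particular $d(x_n,x_{n+1})\leqslant\alpha^n P_0$.

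From here the standard telescoping estimate
\[
d(x_n,x_m)\leqslant\sum_{k=n}^{m-1}d(x_k,x_{k+1})\leqslant\frac{\alpha^n}{1-\alpha}P_0
\]
shows $\{x_n\}$ is Cauchy; completeness gives $x_n\to x^*$, and by the preceding proposition $T$ is continuous, so passing to the limit in $x_{n+1}=Tx_n$ produces $Tx^*=x^*$. For the bound on the number of fixed points, I would assume three pairwise distinct fixed points $u,v,w$ existed; then applying~(\ref{e1}) to the triple $(u,v,w)$ would give
\[
d(u,v)+d(v,w)+d(u,w)\leqslant\alpha\bigl(d(u,v)+d(v,w)+d(u,w)\bigr),
\]
which contradicts $\alpha<1$ since the left side is positive.

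The main obstacle is the verification of pairwise distinctness of $x_n,x_{n+1},x_{n+2}$; without condition~(i) one cannot invoke~(\ref{e1}) at every step, and an orbit of period two would derail the perimeter contraction entirely (the perimeter would simply repeat). Everything else is a routine adaptation of the Banach argument, with the small bookkeeping point that it is the perimeter, not a single distance, that contracts geometrically.
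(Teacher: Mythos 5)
Your proof is correct and follows the paper's own Banach-style scheme: the same orbit $x_{n+1}=Tx_n$, the same use of condition (i) to make each consecutive triple $x_n,x_{n+1},x_{n+2}$ pairwise distinct, the same geometric decay of the perimeters and telescoping Cauchy estimate, and the identical three-fixed-points contradiction at the end. The one genuine divergence is how the limit is identified as a fixed point. The paper does not invoke continuity; it estimates $d(x^*,Tx^*)$ directly, applying (\ref{e1}) to the triple $(x_{n-1},x_n,x^*)$ and letting $n\to\infty$, and to license that application it first proves an auxiliary fact you omit: that \emph{all} orbit points are pairwise distinct (if $x_j=x_i$ with $j\geqslant 3$ minimal, then $p_j=p_i$, contradicting the strict decrease $p_0>p_1>\cdots$), which ensures the triple $(x_{n-1},x_n,x^*)$ is pairwise distinct for all large $n$ even if $x^*$ happens to coincide with an orbit point. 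Your appeal to the continuity proposition, which precedes the theorem in the paper, is legitimate and buys a real simplification: from $x_n\to x^*$ and continuity, $x_{n+1}=Tx_n\to Tx^*$, so uniqueness of limits gives $Tx^*=x^*$ with no distinctness bookkeeping at the limit, making the global-distinctness lemma unnecessary. The trade-off is that you import the proposition as a black box (its proof in the paper incidentally takes $\delta=\ve/(4\alpha)$, which needs the harmless repair $\alpha>0$ or a separate trivial case), whereas the paper's direct estimate keeps the theorem self-contained at the cost of the extra lemma.
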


\begin{proof}
Let $x_0\in X$, $Tx_0=x_1$, $Tx_1=x_2$, \ldots, $Tx_n=x_{n+1}$, \ldots. Suppose that $x_i$ is not a fixed point of the mapping $T$ for every $i=0,1,...$. Let us show that all $x_i$ are different. Since $x_i$ is not fixed, then $x_i\neq x_{i+1}=Tx_i$. By condition (i) $x_{i+2}=T(T(x_i))\neq x_i$ and by the supposition that $x_{i+1}$ is not fixed we have $x_{i+1}\neq x_{i+2}=Tx_{i+1}$. Hence, $x_i$, $x_{i+1}$ and $x_{i+2}$ are pairwise distinct. Further, set
$$
p_0=d(x_0,x_1)+d(x_1,x_2)+d(x_2,x_0),
$$
$$
p_1=d(x_1,x_2)+d(x_2,x_3)+d(x_3,x_1),
$$
$$
\cdots
$$
$$
p_n=d(x_n,x_{n+1})+d(x_{n+1},x_{n+2})+d(x_{n+2},x_n),
$$
$$
\cdots .
$$
Since  $x_i$, $x_{i+1}$ and $x_{i+2}$ are pairwise distinct by~(\ref{e1}) we have $p_1\leqslant \alpha p_0$, $p_2\leqslant \alpha p_1$, \ldots, $p_n\leqslant \alpha p_{n-1}$ and
\begin{equation}\label{e2}
p_0>p_1>...>p_n>\ldots .
\end{equation}
Suppose that $j\geqslant 3$ is a minimal natural number such that $x_j=x_i$ for some $i$ such that $0\leqslant i<j-2$. Then  $x_{j+1}=x_{i+1}$, $x_{j+2}=x_{i+2}$. Hence, $p_i=p_j$ which contradicts to~(\ref{e2}).

Further, let us show that $\{x_i\}$ is a Cauchy sequence.  It is clear that
$$
d(x_1,x_2)\leqslant p_0,
$$

$$
d(x_2,x_3)\leqslant p_1\leqslant \alpha p_0,
$$

$$
d(x_3,x_4)\leqslant p_2\leqslant \alpha p_1\leqslant  \alpha^2 p_0,
$$
$$
\cdots
$$
$$
d(x_n,x_{n+1})\leqslant p_{n-1}\leqslant \alpha^{n-1} p_0,
$$
$$
d(x_{n+1},x_{n+2})\leqslant p_{n}\leqslant \alpha^{n} p_0,
$$
$$
\cdots .
$$
By the triangle inequality,
$$
d(x_n,\,x_{n+p})\leqslant d(x_{n},\,x_{n+1})+d(x_{n+1},\,x_{n+2})+\ldots+d(x_{n+p-1},\,x_{n+p})
$$
$$
\leqslant \alpha^{n-1}p_0+\alpha^{n}p_0+\cdots +\alpha^{n+p-2}p_0 = \alpha^{n-1}(1+\alpha+\ldots+\alpha^{p-1})p_0
=\alpha^{n-1}\frac{1-\alpha^{p}}{1-\alpha}p_0.
$$
Since by the supposition $0\leqslant\alpha<1$, then $d(x_n,\,x_{n+p})<\alpha^{n-1}\frac{1}{1-\alpha}p_0$. Hence, $d(x_n,\,x_{n+p})\to 0$ as $n\to \infty$ for every $p>0$. Thus, $\{x_n\}$ is a Cauchy sequence. By completeness of $(X,d)$, this sequence has a limit $x^*\in X$.

Let us prove that
$Tx^*=x^*$.
By the triangle inequality and by inequality~(\ref{e1}) we have
$$
d(x^*,Tx^*)\leqslant d(x^*,x_{n})+d(x_{n},Tx^*)
=d(x^*,x_{n})+d(Tx_{n-1},Tx^*)
$$
$$
\leqslant d(x^*,x_{n})+d(Tx_{n-1},Tx^*)+d(Tx_{n-1},Tx_{n})+d(Tx_{n},Tx^*)
$$
$$
\leqslant d(x^*,x_{n})+\alpha(d(x_{n-1},x^*)+d(x_{n-1},x_{n})+d(x_{n},x^*)).
$$
Since all the terms in the previous sum tend to zero as $n\to \infty$, we obtain $d(x^*,Tx^*)=0$.

Suppose that there exists at least three pairwise distinct fixed points $x$, $y$ and $z$.  Then $Tx=x$, $Ty=y$ and $Tz=z$, which contradicts to~(\ref{e1}).
\end{proof}

\begin{rem}
Suppose that under the supposition of the theorem the mapping $T$ has a fixed point $x^*$ which is a limit of some iteration sequence $x_0, x_1=Tx_0, x_2=Tx_1,\ldots$ such that $x_n\neq x^*$ for all $n=1,2,\ldots$. Then $x^*$ is a unique fixed point.
Indeed, suppose that $T$ has another fixed point $x^{**}\neq x^*$.
It is clear that $x_n\neq x^{**}$ for all $n=1,2,\ldots$. Hence, we have that the points $x^*$, $x^{**}$ and $x_n$ are pairwise distinct for all $n=1,2,\ldots$. Consider the ratio
$$
R_n=\frac{d(Tx^*,Tx^{**})+d(Tx^*,Tx_{n})+d(Tx^{**},Tx_{n})}{d(x^*,x^{**})+d(x^*,x_{n})+d(x^{**},x_{n})}
$$
$$
=\frac{d(x^*,x^{**})+d(x^*,x_{n+1})+d(x^{**},x_{n+1})}{d(x^*,x^{**})+d(x^*,x_{n})+d(x^{**},x_{n})}.
$$
Taking into consideration that $d(x^*,x_{n+1})\to 0$, $d(x^*,x_{n})\to 0$, $d(x^{**},x_{n+1})\to d(x^{**},x^*)$ and $d(x^{**},x_{n})\to d(x^{**},x^*)$, we obtain
$R_n\to 1$ as $n\to \infty$, which contradicts to condition~(\ref{e1}).
\end{rem}

\begin{ex}
Let us construct an example of the mapping $T$ contracting perimeters of triangles which has exactly two fixed points.
Let $X=\{x,y,z\}$, $d(x,y)=d(y,z)=d(x,z)=1$, and let $T\colon X\to X$ be such that $Tx= x$, $Ty= y$ and $Tz= x$. One can easily see that conditions (i) and (ii) of Theorem~\ref{t1} are fulfilled.
\end{ex}

\begin{ex}
Let us show that condition (i) of Theorem~\ref{t1} is necessary.
Let $X=\{x,y,z\}$, $d(x,y)=d(y,z)=d(x,z)=1$, and let $T\colon X\to X$ be such that $Tx=y$, $Ty=x$ and $Tz=x$. One can easily see that condition (ii) of Theorem~\ref{t1} is fulfilled but $T$ does not have any fixed point.
\end{ex}

Let $(X,d)$ be a metric space. Then a mapping $T\colon X\to X$ is called a \emph{contraction mapping} on $X$ if there exists $\alpha\in [0,1)$ such that
\begin{equation}\label{e3}
d(Tx,Ty)\leqslant \alpha d(x,y)
\end{equation}
for all $x,y \in X$.

\begin{cor}
\textbf{(Banach fixed-point theorem)}
Let $(X,d)$ be a  non\-empty complete metric space with a contraction mapping $T\colon X\to X$.  Then $T$ admits a unique fixed point.
\end{cor}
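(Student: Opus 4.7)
The plan is to deduce the Banach fixed-point theorem from Theorem~\ref{t1}, together with a short direct argument that disposes of the low-cardinality cases and the uniqueness of the fixed point.

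First, I would dispatch the cases $|X|\leqslant 2$, which fall outside the scope of Theorem~\ref{t1}. If $|X|=1$, the single point of $X$ is trivially fixed. If $|X|=2$, say $X=\{a,b\}$ with $a\neq b$, then~(\ref{e3}) applied to the pair $(a,b)$ gives $d(Ta,Tb)\leqslant\alpha d(a,b)<d(a,b)$, which rules out both $T=\mathrm{id}_X$ and $T$ being the swap; hence $Ta=Tb$, and this common image is a fixed point.

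Second, for $|X|\geqslant 3$, I would verify the two hypotheses of Theorem~\ref{t1}. Condition (ii) is immediate: for any pairwise distinct $x,y,z\in X$, summing the three contraction inequalities $d(Tx,Ty)\leqslant\alpha d(x,y)$, $d(Ty,Tz)\leqslant\alpha d(y,z)$, $d(Tx,Tz)\leqslant\alpha d(x,z)$ yields~(\ref{e1}) with the same $\alpha$, so $T$ contracts perimeters of triangles. Condition (i) is a short contradiction: assume $Tx\neq x$ but $T(Tx)=x$; then applying~(\ref{e3}) to the pair $(x,Tx)$ gives $d(x,Tx)=d(T(Tx),Tx)\leqslant\alpha d(x,Tx)$, which forces $d(x,Tx)=0$ since $\alpha<1$, contradicting $Tx\neq x$.

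Third, Theorem~\ref{t1} then supplies a fixed point, but only guarantees at most two of them. Uniqueness follows from~(\ref{e3}) applied once more: if $x^*$ and $x^{**}$ were two distinct fixed points, then $d(x^*,x^{**})=d(Tx^*,Tx^{**})\leqslant\alpha d(x^*,x^{**})$ forces $d(x^*,x^{**})=0$, a contradiction. There is no real obstacle in this argument; the only subtlety worth flagging is that Theorem~\ref{t1} by itself does not deliver uniqueness, so the stronger pairwise contraction hypothesis must be invoked one final time to collapse the potential two fixed points into one.
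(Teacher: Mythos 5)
Your proposal is correct and follows essentially the same route as the paper: dispatch $|X|\leqslant 2$ directly, verify conditions (i) and (ii) of Theorem~\ref{t1} from~(\ref{e3}) for $|X|\geqslant 3$, and settle uniqueness by one more application of the contraction inequality. You merely spell out the steps the paper leaves as ``trivial'' or ``standard'' (the two-point case and the uniqueness argument), which is a faithful elaboration rather than a different proof.
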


\begin{proof}
For $|X|=1,2$ the proof is trivial. Let $|X|\geqslant 3$. Suppose that there exists $x\in X$ such that $T(Tx)=x$. Consequently, $d(x,Tx)=d(Tx,x)=d(Tx,T(Tx))$, which contradicts to~(\ref{e3}). Thus, condition (i) of Theorem~\ref{t1} holds. Let $x, y, z\in X$ be pairwise distinct. By~(\ref{e3}) we obtain $d(T(x),T(y))\leqslant \alpha d(x,y)$, $d(T(y),T(z))\leqslant \alpha d(y,z)$ and $d(T(x),T(z))\leqslant \alpha d(x,z)$  which immediately implies condition (ii) of Theorem~\ref{t1}. This completes the proof of existence of fixed point.

The uniqueness can be shown in a standard way.
\end{proof}

Let $(X,d)$ be a metric space and let $x,y,z \in X$. We shall say that the point $y$ \emph{lies between} $x$ and $z$ in the metric space $(X,d)$ if the extremal version of the triangle inequality
\begin{equation}\label{e4}
d(x,z)=d(x,y)+d(y,z)
\end{equation}
holds.

\begin{ex}
Let us construct an example of a mapping $T\colon X\to X$ contracting perimeters of triangles that is not a contraction mapping for a metric space $X$ with $|X|=\aleph_0$. Let $X=\{x^*, x_0,x_1,\ldots \}$ and let $a$ be positive real number.
Define a metric $d$ on $X$ as follows:
$$
d(x,y)=
\begin{cases}
a/2^{\lfloor i /2 \rfloor}, &\text{if } x=x_{i},  y=x_{i+1}, i=0,1,2,...,\\
d(x_i,x_{i+1})+\cdots+d(x_{j-1},x_j), &\text{if } x=x_i, \,  y=x_{j}, \, i+1<j,\\
4a-d(x_0,x_i), &\text{if } x=x_i, \, y=x^*,\\
0, &\text{if } x=y,
\end{cases}
$$
where $\lfloor\cdot\rfloor$ is the floor function.
\begin{figure}[ht]
\begin{center}
\begin{tikzpicture}[scale=0.5]
\draw (1,0) node [above] {\small{$a$}};
\draw (3,0) node [above] {\small{$a$}};
\draw (5,0) node [above] {\small{$\frac{a}{2}$}};
\draw (7,0) node [above] {\small{$\frac{a}{2}$}};
\draw (9,0) node [above] {\small{$\frac{a}{4}$}};
\draw (11,0) node [above] {\small{$\frac{a}{4}$}};

\draw (0,0) node [below] {$x_0$} --
      (2,0) node [below] {$x_1$} --
      (4,0) node [below] {$x_2$} --
      (6,0) node [below] {$x_3$} --
      (8,0) node [below] {$x_4$} --
      (10,0) node [below] {$x_5$} --
      (12,0) node [below] {$x_6$};
\draw (14,0) node [below] {$x^*$};

 \foreach \i in {(0,0),(2,0),(4,0),(6,0),(8,0),(10,0),(12,0),(14,0)}
  \fill[black] \i circle (3pt);

 \foreach \i in {(12+0.3,0),(12+0.6,0),(12+0.9,0),(12+1.2,0),(12+1.5,0),(12+1.8,0)}
  \fill[black] \i circle (1pt);
\end{tikzpicture}
\begin{center}
\caption{The points of the space $(X,d)$ with consecutive distances between them.}\label{fig1}
\end{center}
\end{center}
\end{figure}
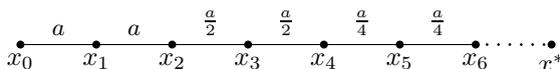

The reader can easily verify that for every three different points from the set $X$ one of them lies between the two others, see Figure~\ref{fig1}. Moreover, the space is complete with the single accumulation point $x^*$.

Define a mapping  $T\colon X\to X$ as $Tx_i=x_{i+1}$ for all $i=0,1,\ldots$ and $Tx^*=x^*$. Since
$d(x_{2n},x_{2n+1})=d(T{x_{2n}},T{x_{2n+1}})$, $n=0,1,2...$, using~(\ref{e3}) we see that $T$ is not a contraction mapping.

Let us show that inequality~(\ref{e1}) holds for every three pairwise distinct points from the space $X$.
Consider first triplets of points $x_i, x_j, x^* \in X$ with $0\leqslant i<j$.
According to the definition of the metric $d$ we have
$$
d(x_i,x_j)+d(x_j,x^*)+d(x_i,x^*)=2d(x_i,x^*)=8a-2d(x_0,x_i)
$$
and
$$
d(Tx_i,Tx_j)+d(Tx_j,Tx^*)+d(Tx_i,Tx^*)=2d(Tx_i,Tx^*)=8a-2d(x_0,x_{i+1}).
$$
According to the formula for a geometric series that computes the sum of $n$ terms we have
$$
d(x_0,x_i)=
\begin{cases}
4a(1-(1/2)^n), &\text{if } i=2n,\\
4a(1-(1/2)^n)-a/{2^{n-1}}, &\text{if } i=2n-1,\\
\end{cases}
$$
$n=1,2,\ldots$.
Note also that $d(x_0,x_{i+1})=d(x_0,x_i)+a/(2^{\lfloor i/2 \rfloor})$. Consider the ratio
$$
\frac{d(Tx_i,Tx_j)+d(Tx_j,Tx^*)+d(Tx_i,Tx^*)}{d(x_i,x_j)+d(x_j,x^*)+d(x_i,x^*)}
=\frac{8a-2d(x_0,x_{i+1})}{8a-2d(x_0,x_i)}
$$
$$
=\frac{4a-d(x_0,x_{i})-a/(2^{\lfloor i/2 \rfloor})}{4a-d(x_0,x_i)}
$$
$$
=
\begin{cases}
\frac{4a-4a(1-(1/2)^n)-a/(2^{\lfloor i/2 \rfloor})}{4a-4a(1-(1/2)^n)}, &\text{if } i=2n,\\
\frac{4a-4a(1-(1/2)^n)+a/2^{n-1}-a/(2^{\lfloor i/2 \rfloor})}{4a-4a(1-(1/2)^n)+a/2^{n-1}}, &\text{if } i=2n-1,
\end{cases}
$$
$$
=
\begin{cases}
\frac{3}{4}, &\text{if } i=2n,\\
\frac{2}{3}, &\text{if } i=2n-1.
\end{cases}
$$

Let now $x_i,x_j,x_k \in X$ be such that $0\leqslant i<j<k$. Using Figure~\ref{fig1}, we see that
$$d(x_i,x_j)+d(x_j,x_k)+d(x_i,x_k)-(d(Tx_i,Tx_j)+d(Tx_j,Tx_k)+d(Tx_i,Tx_k))
$$
$$
=2(a/2^{\lfloor i/2 \rfloor}-a/2^{\lfloor k/2 \rfloor}).
$$
Consider the ratio
$$
R_{i,k}=\frac{d(Tx_i,Tx_j)+d(Tx_j,Tx_k)+d(Tx_i,Tx_k)}
{d(x_i,x_j)+d(x_j,x_k)+d(x_i,x_k)}
$$
$$
=\frac{d(x_i,x_j)+d(x_j,x_k)+d(x_i,x_k)-2(a/2^{\lfloor i/2 \rfloor}-a/2^{\lfloor k/2 \rfloor})}
{d(x_i,x_j)+d(x_j,x_k)+d(x_i,x_k)}
$$
$$
=1-2\frac{(a/2^{\lfloor i/2 \rfloor}-a/2^{\lfloor k/2 \rfloor})}
{d(x_i,x_j)+d(x_j,x_k)+d(x_i,x_k)}.
$$
Observe that $i+1<k$. Hence,
\begin{equation}\label{e5}
a/2^{\lfloor k/2 \rfloor} \leqslant a/(2\cdot2^{\lfloor i/2 \rfloor}).
\end{equation}
Using the structure of the space $(X,d)$, one can show that $d(x_i,x^*)\leqslant 4d(x_i,x_{i+1})$. Clearly, $d(x_i,x_k)\leqslant d(x_i,x^*)$. Hence, $d(x_i,x_k)\leqslant 4d(x_i,x_{i+1})$. From  equality~(\ref{e4}) and the last inequality it follows that
$$
d(x_i,x_j)+d(x_j,x_k)+d(x_i,x_k)=2d(x_i,x_k)\leqslant 8d(x_i,x_{i+1})=8a/(2^{\lfloor i/2\rfloor}).
$$
Using this inequality and inequality~(\ref{e5}) we obtain
$$
R_{i,k}\leqslant 1-2\frac{(a/2^{\lfloor i/2 \rfloor}-a/(2\cdot2^{\lfloor i/2 \rfloor}))}
{8a/2^{\lfloor i/2 \rfloor}}=\frac{7}{8}.
$$
Hence, inequality~(\ref{e1}) holds for every three pairwise distinct points from the space $X$ with the coefficient $\alpha=\frac{7}{8}=\max\{\frac{2}{3},\frac{3}{4},\frac{7}{8}\}$.
\end{ex}

\textbf{Acknowledgements.} The author was partially supported by the Grant EFDS-FL2-08 of The European Federation of Academies of Sciences and Humanities (ALLEA).

% ------------------------------------------------------------------------
\end{document}